\numberwithin{equation}{section}
\theoremstyle{plain}
\newtheorem{thm}{Theorem}[section]
\newtheorem*{proposition*}{Proposition}
\newcommand{\cG}{\mathcal G}
\begin{document}

\begin{frontmatter}
\title{Searching for the core variables in principal components analysis}
\runtitle{Searching for the core variables in principal components analysis}

\begin{aug}
\author{\fnms{Yanina} \snm{Gimenez}\thanksref{a}\ead[label=e1]{yanugimenez@gmail.com}},
\and
\author{\fnms{Guido} \snm{Giussani}\thanksref{a}%
\ead[label=e2]{ggiussani@udesa.edu.ar}}

\runauthor{Gimenez and Giussani}

\affiliation[a]{Universidad de San Andr\'es and CONICET}

\address{Universidad de San Andr\'es, Vito Dumas 284, 1644 Victoria, Buenos Aires, Argentina and CONICET\\
\printead{e1,e2}}

\end{aug}

\begin{abstract}
In this article, we introduce a procedure for selecting variables
in principal components analysis. It is developed to
identify a small subset of the original variables that best
explain the principal components through nonparametric
relationships. There are usually some noisy uninformative
variables in a dataset, and some variables that are strongly
related to one another because of their general dependence.
The procedure is designed to be used following the satisfactory
initial principal components analysis with all variables,
and its aim is to help to interpret the underlying structures. We
analyze the asymptotic behavior of the method and provide some
examples.
\end{abstract}

\begin{keyword}[class=MSC]
\kwd[Primary ]{62H25}
\kwd[; secondary ]{62G08}
\end{keyword}

\begin{keyword}
\kwd{Informative Variables, Multivariate Analysis, Principal Components, Selection of Variables}
\end{keyword}

\end{frontmatter}

\section{Introduction}\label{AC-Introd}

Principal components analysis (PCA) is the best known dimensional
reduction procedure for multivariate data. An important drawback of PCA is
that it sometimes provides poor quality interpretation of data for
practical problems, because the final output is a
linear combination of the original variables. The aim of the present
study is to identify a small subset of the original variables in a dataset, whilst retaining
most of the information related to the
first $k$ principal components.

There is a large body of literature that focuses on trying to
interpret principal components.
\citet{J95} has introduced
rotation techniques.
\citet{V00} proposed to restrict the
value of the loadings for PCA to a small set of allowable integers
such as \{-1, 0, 1\}.
\citet{M84} presented a different
strategy that aims to select a subset of the original variables with
a similar criterion to PCA.

A few years later a whole literature of variable selection appeared,
inspired by the LASSO (least absolute shrinkage and selection
operator). This technique was introduced by
\citet{lasso}. The way the LASSO works was described thus: `` It
shrinks some coefficients and sets others to 0, and hence tries to
retain the good features of both subset selection and ridge
regression... The `LASSO' minimizes the residual sum of squares
subject to the sum of the absolute value of the coefficients being
less than a constant. Because of the nature of this constraint it
tends to produce some coefficients that are exactly 0 and hence
gives interpretable models."

\citet{JTU03} proposed SCoTLASS that imposes a bound
on the sum of the absolute values of the loadings in the component,
using a similar idea to the one that LASSO used in regression.
\citet{ZHT06} presented SPCA (sparse PCA) that extends the
``elastic net" of
\citet{ZH05} that was a generalization of the LASSO. As they
mentioned ``SPCA is built on the fact that PCA can be written as a
regression-type optimization problem, with quadratic penalty; the
LASSO penalty (via the elastic net) can then be directly integrated
into the regression criterion, leading to a modified PCA with sparse
loadings."

\citet{clustering} studied the application of
sparse PCA to clustering and problems of feature selection. Sparse
PCA seeks sparse factors, or linear combinations of variables in the
dataset, to explain as much variance in the data as possible while
limiting the number of nonzero coefficients as far as possible. The
authors applied their results to some classic biological clustering
and feature selection problems.

Recently,
\citet{lasso for genes} introduced
the notion of Lassoed principal components for identifying
differentially-expressed genes, and considered the problem of
testing the significance of features in high dimensional data.

Our approach is rather different and is designed to be used after
satisfactory PCA has been achieved rather than, as in other
methods, to produce principal components with particular
characteristics (e.g., some coefficients that are zero) so that
only interpretable principal components are produced. We first
perform classical PCA and then look for a small subset of the
original variables that contain almost all the relevant
information to explain the principal components.
However, our method can also be used after
performing any sparse PCA method as those described previously.

To develop our method, we borrowed some ideas for selecting
variables from \citet{selection}, who introduced two
procedures for selecting variables in cluster analysis, and
classification rules. Both of these procedures are based on the idea
of ``blinding" unnecessary variables. To cancel the effect of a
variable, they substituted all its values with the marginal mean (in
the first procedure) or with the conditional mean (in the second).
The marginal mean approach was mainly intended to identify ``noisy"
uninformative variables, but the conditional mean approach could
also deal with dependence. We adapted the idea behind the second
procedure to PCA.

In Section 2, we introduce the main definitions, a population
version of our proposed method, and the empirical version; we also
present our main results. In Section 3, a simulation study is
conducted, where the results are compared with other well-known
PCA variable selection procedures. Finally, in Section 4, we study a
real data example. Proofs are given in the Appendix.

\section{Our method: Definitions and properties}

We begin by defining our notation and stating the problem in terms
of the underlying distribution of the random vector $\mathbf{X}$.
Then we give our estimates based on the sample data, via the
empirical distribution.

\subsection{Population version}

We define $\mathbf{X}  \in \mathbb{R}^p$ as a random vector with distribution $\mathbb{P}$. The coordinates of the vector
$\mathbf{X}$ are defined as $X[i], \ \ i=1, \ldots, p$. The covariance matrix of $\mathbf{X}$ is denoted $\Sigma$.

For a given random vector $\mathbf{\tilde{X}}$, we say that it satisfy the \bf Assumption H1 \rm if:

\begin{itemize}
\item [i)] $\mathbb{E}(\Vert \mathbf{\tilde{X}} \Vert^2) < \infty$;
\item [ii)] The covariance matrix of $\mathbf{\tilde{X}}$ is positive definite.
\item [iii)] All the eigenvalues of the covariance matrix are different.
\end{itemize}

Throughout the manuscript, $\mathbf{X}$ fulfills \bf Assumption H1\rm.

As is well known, the first principal component associated with the
vector $\mathbf{X}$ is defined as
\begin{eqnarray*}
\alpha^1(\mathbb{P}):= \alpha^1 &=& \underset{\left\Vert\alpha\right\Vert=1}{arg \ max} \
Var\left(\mathbf{\alpha'}\mathbf{X}\right)\\&=&
\underset{\left\Vert\mathbf{\alpha}\right\Vert=1}{arg \ max} \ \mathbf{\alpha}' \Sigma
\mathbf{\alpha},
\end{eqnarray*}
and the next principal components are defined as
\begin{eqnarray*}
\alpha^k(\mathbb{P}):= \alpha^k &=& \underset{\left\Vert\mathbf{\alpha}\right\Vert=1, \ \mathbf{\alpha} \bot [\alpha^1, \ldots,\alpha^{k-1}]}{arg \ max} \
Var\left(\mathbf{\alpha}'\mathbf{X}\right)\\&=&
\underset{\left\Vert\mathbf{\alpha}\right\Vert=1, \ \mathbf{\alpha} \bot [\alpha^1, \ldots,\alpha^{k-1}]}{arg \ max} \ \mathbf{\alpha}' \Sigma \mathbf{\alpha} \
\ \forall \ 2\leq k\leq p,
\end{eqnarray*}
where $[\alpha^1, \ldots,\alpha^{k-1}]$ is the subspace generated by the vectors $\{\alpha^1, \ldots,\alpha^{k-1}\}$.

From the spectral theorem, it follows that, if $\lambda^1 >
\lambda^2
> \ldots
> \lambda^p$ are the $\Sigma$ eigenvalues, the
solutions to the PCA are
the corresponding
eigenvectors, $\alpha^k, \ \ k=1, \ldots, p$.

\

Given a subset of indices $I \subset \{1, \ldots, p\}$ with
cardinality $d\leq p$, we define $\mathbf X[I]$ as the subset
 of random variables $\{X[i], i \in I \}$. With a slight
 abuse of notation, if $I=\{i_1 < \ldots < i_d\}$, we
 can also denote $\mathbf X[I]$ to the vector $(X[i_1], \ldots,X[i_d])$,
 and define the vector $\mathbf{Y}^{I}:=(Y^{I}[1],\ldots,Y^{I}[p])$, where

\begin{equation*}
\label{Y} Y^{I}[i]= \left\{
\begin{array}{ccc}
X[i] &&\text{ if }i \in I
\\  \mathbb{E}(X[i] \vert \mathbf X[I]) &&\text{ if } i \notin I.
\end{array}
\right.
\end{equation*}
$\mathbf{Y}^{I} \in \mathbb{R}^p$ depends only on the
$\{X[i], i \in I \}$ variables and then the  principal components associated with the
vector $\mathbf{Y}^{I}$ depend only on the variables in $\mathbf X[I]$. The distribution of
$\mathbf{Y}^{I}$ is denoted $\mathbb{P}_{\mathbf{Y}^{I}}$, the covariance matrix is $\Sigma_{\mathbf{Y}^{I}}$ and $g^i(z) = \mathbb E(X[i] \vert \mathbf X[I] = z)$ for $i\notin I$ is the regression function.


In what follows we assume that $\mathbf{Y}^{I}$ fulfills the \bf Assumption H1\rm.

\bf Remark: \rm This assumptions will not hold if there is a variable $X[i]$ such that $g^i(z)$ is a linear combination of the variables in $I$. For instance, if $\mathbf{X}$ is gaussian or $X[i]$ is independent of the variables in $I$. In practice we will see how this problem can be tackled in the example 2 of section 3.

We are looking for a subset $I$ of small cardinality that minimizes the distance between the original principal components and the
principal components that are function of the variables in the subset $I$. This can be done following two different approaches.

\

\bf Local approach \rm

We define the objective function $h^1(I)$ as
\begin{equation}
\label{functionh1}
h^1(I, \mathbb{P} , \mathbb{P}_{\mathbf{Y}^{I}}):= h^1(I,\mathbb{P}):= h^1(I)=
\Vert \alpha^1(\mathbb{P}) - \alpha^1(\mathbb{P}_{\mathbf{Y}^{I}})\Vert^2,
\end{equation}
which measures the squared distance between the first original principal component and the
first principal component that is a function of the variables  in the subset $I$.

Given a fixed integer $d$, $1\leq d <<p$, $\mathcal{I}_d$ is
the family of all subsets of $\{1, \ldots, p\}$ with cardinality
$d$ and $\mathcal{I}_{1,0} \subset \mathcal{I}_d$ is the family of subsets in which
the minimum $h^1(I)$ is attained for $I \in \mathcal{I}_d$,
i.e.,
\begin{equation*}
\label{objetivo1} \mathcal{I}_{1,0} = argmin_{I \in \mathcal{I}_d}
h^1(I),
\end{equation*}
or, equivalently,
\begin{equation}
\label{henobjetivo1}
h^1(I_1)=min_{I \in \mathcal{I}_d} h^1(I)  \text{ for all } I_1 \in \mathcal{I}_{1,0}.
\end{equation}

Analogously, we define
\begin{equation}
\label{functionhk}
h^k(I,\mathbb{P}, \mathbb{P}_{\mathbf{Y}^{I}}):=h^k(I,\mathbb{P}):= h^k(I)= \Vert \alpha^k(\mathbb{P}) - \alpha^k(\mathbb{P}_{\mathbf{Y}^{I}})\Vert^2,
\end{equation} and
\begin{equation*}
\label{henobjetivoj} \mathcal{I}_{k,0} = argmin_{I \in
\mathcal{I}_d} h^k(I),
\end{equation*}
 for $k=2, \ldots, p$.

For $k=1, \ldots, p$ (\ref{functionhk}) measures the squared
distance between the $k$-th original principal component and the
$k$-th principal component that is a function of the variables in
the subset $I$.

Principal components may, in some cases, be rather difficult to
interpret, which is an important issue in practice.
If we can find a small cardinal subset $I$, for which the objective
function (\ref{functionhk}) is small enough, then the $k$-th principal component will be well explained by a subset of the original variables.

\

\bf Global approach\rm

In this case, the objective function is
\begin{equation}
\label{objetivo}
 h(I):=\sum_{k=1}^q p_kh^k(I),
\end{equation}
with $p_k\geq 0,\ \sum_{k=1}^q p_k=1,\ 2\leq q<p$,\\
and we define
\begin{equation*}
\label{Iqobjetivo} \tilde {\mathcal{I}}_{q,0}= argmin_{I \in \mathcal{I}_d} h(I).
\end{equation*}

This time the objective function (\ref{objetivo}) deals with
finding a unique subset $I$ to explain the first $q$ principal
components at once. If we think that the $q$ components are equally
important then we choose $p_k= \frac{1}{q}$. Otherwise, if some
components are more important than others then we can put
different weights. As a default we suggest choosing weights
proportional to the variance that each component is explaining,
i.e. $p_k=\lambda^k /\sum_{k=1}^{q}\lambda^k$.

\

On the Local approach, we consider a different subset for each principal component, using the objective function (\ref{functionhk}) for each $k$.
On the Global approach, we seek a unique subset for all the first $q$ principal components using (\ref{objetivo}). In practice, we can choose $q$, so that the first $q$ principal components explain a high percent of the total variance and consider a subset for this principal components.

These subsets tell us which are the original variables that ``best explain" the first $q$ principal components. In what follows we refer to this method as the \it Blinding Procedure \rm\bf(BP)\rm.

\

An important issue is how to choose $d$, the cardinality of the
set $I$. On the one hand we have that the objective function
(\ref{functionhk}) decreases when $d$ increases for every $k$. On the
other hand we look for a subset $I$ with small cardinality, so we have a
constant tug of war. Since the components have unitary norms there
is a direct relationship between (\ref{functionhk}) and the angle between the two vectors. It is clear that the smaller the angle is the closer the components are.

Hence, for the $k$-th component we propose to fix an angle, $\gamma$, and choose $d$ as the smallest value for which $\mathcal{I}_{k,0}(d)$ makes the angle
between the blinding and the original component smaller than
$\gamma$.

If we want ``to explain'' the first $q$ principal components at once,  we propose to fix an angle, $\gamma$, and choose $d$ as the smallest value for which $\tilde {\mathcal{I}}_{q,0}(d)$ makes the largest
of the $q$ angles smaller than $\gamma$. In both cases, the angle $\gamma$ must be chosen by the user. As
default, we propose to fix an angle not larger than 25 degrees.

\subsection{Empirical version. Consistent estimates of the optimal
subset}

We aimed to consistently estimate the sets $I_1, \ldots, I_q$ from
a sample $\textbf{X}_1, \ldots, \mathbf{X}_n$ of i.i.d. random vectors with
distribution $\mathbb{P}$. \rm

\

Given a subset $I \in \mathcal{I}_d$, the first step is to
build a sample $ \mathbf{Y}_1^{I}, \ldots, \mathbf{Y}_n^{I}$ of random vectors in
$\mathbb R^p$, which only depends on $ \mathbf{X}[I]$, using nonparametric
estimates of the conditional expectation (the regression function). Below, we assume that

\bf Assumption H2 \rm For all $i \notin I$, $g^i_n(z)$ is a strongly consistent estimate
of $g^i(z)$ uniformly in $z$. Conditions under which \bf H2 \rm holds can be found in \citet{Han}.

\

First, we define the empirical version of the \it blinded \rm observations. As an example, we consider
the nearest neighbours estimator. We therefore set an integer value $r$ (the number of
nearest neighbours that we are going to use) with respect to an
appropriate metric (typically Euclidean or Mahalanobis distance),
only considering the coordinates on $I$.

More precisely, for each $j \in \{1, \ldots , n\}$, we find the
set of indices $C_j=: C_j(X_j[I])$ of the $r$-nearest neighbours of
$\mathbf{X}_j[I]$ among $\{\mathbf{X}_1[I], \ldots,\mathbf{X}_n[I]\}$, where
$\mathbf{X}_j[I]=\{X_j[i],i\in I\}$.

Next, we define the random vectors $\mathbf{Y}_j^{I} , 1 \leq j \leq
n$, verifying:
\begin{eqnarray}\label{def new sample}
Y_j^{I} [i] = \left\{
\begin{array}{ccc}
 X_j [i] &&\text{ if }i \in I
\\ \frac{1}{r}\sum_{m \in C_j} X_{m}[i] &&\text{ otherwise,}
\end{array}
\right.
\end{eqnarray}
where $X_j[i]$ stands for the ith-coordinate of the vector
$\mathbf{X}_j$.

This corresponds to use for $i \notin I$ the
nonparametric estimate
\begin{equation*}
g^i_n(z) = g^i_n(z[I]) = \frac{1}{r}\sum_{m \in C_j(z[I])} X_{m}[i],
\end{equation*}

since $g^i_n(z)$ is a strongly consistent estimate
of $g^i(z)$, where $g^i(z) = \mathbb E(X[i] \vert \mathbf X[I] = z)$. Moreover, observe that the notation $z[I]$ is to emphasize that the function depends on the coordinates indicated by the subset $I$.

$\mathbb{P}_{n}$ stands for the empirical distribution associated with
$\{\mathbf{X}_j , 1 \leq j \leq n\}$ and $\mathbb{P}_{n,\mathbf{Y}^{I}}$ stands for the empirical distribution of
$\{\mathbf{Y}_j^{I} , 1 \leq j \leq n\}$.

In our examples to consistently estimate the optimal number of
nearest neighbours we use the generalized cross validation procedure
proposed by \citet{LG87}, i.e.
\begin{equation*}
\widehat{r}_{opt}(i,I)=\arg \min_r \frac{\frac{1}{n} \sum_{j=1}^n\left( X_j[i]-Y_j[i]\right)^2}{\left(1- \frac{1}{r}\right)^2},\quad i \notin I.
\end{equation*}

When $d$ is large, nonparametric estimators perform
poorly due to the curse of dimensionality. In this case, a
semi-parametric approach should be used like that proposed by
\citet{HS96}. Also a recent proposal by \citet{BFGM13} called
COBRA can be used to avoid the curse of dimensionality. The
consistency result given in the following theorem will still be
valid as long as the semi-parametric estimates verify the
consistency assumptions required for the purely nonparametric
estimates.

We will be mainly interested in
cases where $d$ is small. In our experience a good idea is to start
the search with some genetic algorithm which  provides an initial
solution with a few variables ($d$ small) and then improve  the
result working with $card(I) \leq d$. Also a forward-backward
algorithm as the one proposed in \cite{selection} can be used.

\

Finally, we define, for each $I \in \mathcal{I}_d$, the corresponding empirical versions
\begin{eqnarray}\label{empirical}\hat{\mathbf{\alpha}}^k_n(I):= \alpha^k(\mathbb{P}_{n,\mathbf{Y}^{I}}), \hat h^k_n(I) =
h^k(I, \mathbb{P}_n,\mathbb{P}_{n,\mathbf{Y}^{I}}), \hat I_{kn} = argmin_{I \in \mathcal{I}_d} \hat h^k_n(I)\end{eqnarray}
and

\begin{eqnarray}\label{empiricalOBJ}
\hat h_n(I)=\sum_{k=1}^q p_k \hat h _n^{k}(I), \ \ \hat I_n = argmin_{I \in \mathcal{I}_d} \hat h_n(I)
\end{eqnarray}

respectively.

Let us observe that $\hat{\mathbf{\alpha}}^k_n(I)$ corresponds to the principal component associated with the sample $\{\mathbf{Y}_j^{I} , 1 \leq j \leq n\}$. $\hat h^k_n(I)$ corresponds to the objective function of the k-th principal component, it measures the square distance between the k-th original principal component and the k-th principal component that is a function of the variables in the subset $I$. The function $\hat h^k_n(I)$ determines which are the variables that retain most of the information related to the k-th principal component. The variables we are choosing are the ones that minimize the function $\hat h^k_n(I)$. The subset $\hat I_{kn}$ indicates which this variables are. In case we are looking for a unique subset $I$ to explain the first $q$ principal components, we consider the function $\hat h_n(I)$.\\

A robust version can be obtained using robust principal components
(see for instance \citet{conv robusta}) and replacing
in (\ref{def new sample}) the local mean by local median.

\

\begin{thm} \label{consistencia} Under assumptions H1 for $\mathbf{X}$ and $\mathbf{Y}^{I}$ and H2 we have that
$\hat I_{kn} \in \mathcal{I}_{k,0}$ ultimately for $k=1, \ldots, q$, i.e.,
$\hat{I}_{kn}=I_k$ with $I_k \in \mathcal{I}_{k,0}$ $\forall n> n_0(\omega)$,
with probability one. We also have that $\hat I_{n}\in \mathcal{\tilde{I}}_{q,0}$ ultimately.
\end{thm}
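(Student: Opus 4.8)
The plan is to reduce the whole statement to a deterministic argmin-consistency argument over the finite family $\mathcal{I}_d$. The crucial structural observation is that $\mathcal{I}_d$ is a \emph{finite} set, so it suffices to prove, for each fixed $I \in \mathcal{I}_d$ and each $k$, the pointwise almost-sure convergence of the empirical objective to its population counterpart,
\[
\hat h^k_n(I) \xrightarrow[n\to\infty]{} h^k(I) \quad\text{a.s.},
\]
and then invoke the following elementary fact: if $f_n(I)\to f(I)$ for every element of a finite set and $\mathcal{A}=\mathrm{argmin}\,f$, then $\mathrm{argmin}\,f_n \subseteq \mathcal{A}$ for all $n$ larger than some $n_0(\omega)$. [Proof sketch: set $\delta=\min_{J\notin\mathcal{A}}\big(f(J)-\min_I f(I)\big)>0$; once $\sup_I|f_n(I)-f(I)|<\delta/3$, the value of $f_n$ at any minimizer of $f$ lies strictly below its value at any non-minimizer, so no non-minimizer can minimize $f_n$.] Since the family is finite, the finitely many almost-sure convergences hold simultaneously on a single probability-one event, so this deterministic argument applies pathwise and yields $\hat I_{kn}\in\mathcal{I}_{k,0}$ ultimately.

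The pointwise convergence $\hat h^k_n(I)\to h^k(I)$ splits into two eigenvector limits, since $h^k(I)=\|\alpha^k(\mathbb{P})-\alpha^k(\mathbb{P}_{\mathbf{Y}^I})\|^2$ while $\hat h^k_n(I)=\|\alpha^k(\mathbb{P}_n)-\alpha^k(\mathbb{P}_{n,\mathbf{Y}^I})\|^2$. For the first term I would use the strong law of large numbers under H1 i) to obtain $\Sigma_{\mathbb{P}_n}\to\Sigma_{\mathbb{P}}$ a.s., and then the continuity of the map sending a symmetric matrix with a simple eigenvalue to the corresponding unit eigenvector. Assumption H1 iii) guarantees every eigenvalue is simple, so this map is continuous in a neighbourhood of $\Sigma_{\mathbb{P}}$, giving $\alpha^k(\mathbb{P}_n)\to\alpha^k(\mathbb{P})$ a.s. (A sign convention, e.g.\ fixing the sign of the first nonzero coordinate, must be imposed so that $\alpha^k$ is well defined; with a simple eigenvalue this convention is stable under small perturbations.)

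The second limit, $\alpha^k(\mathbb{P}_{n,\mathbf{Y}^I})\to\alpha^k(\mathbb{P}_{\mathbf{Y}^I})$, is the heart of the argument and I expect it to be the \textbf{main obstacle}. By the same eigenvector-continuity argument it is enough to show that the covariance matrix of the blinded empirical sample $\{\mathbf{Y}^I_j\}$ converges a.s.\ to $\Sigma_{\mathbb{P}_{\mathbf{Y}^I}}$. I would introduce the oracle sample $\tilde{\mathbf{Y}}_j$ obtained by replacing, for $i\notin I$, the nearest-neighbour estimate $g^i_n$ in (\ref{def new sample}) by the true regression function $g^i$. The vectors $\tilde{\mathbf{Y}}_j$ are i.i.d.\ copies of $\mathbf{Y}^I$ with finite second moments (these follow from H1 i) and the conditional Jensen inequality), so their empirical covariance converges to $\Sigma_{\mathbb{P}_{\mathbf{Y}^I}}$ by the strong law. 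It then remains to control the difference between the empirical covariances of $\{\mathbf{Y}^I_j\}$ and $\{\tilde{\mathbf{Y}}_j\}$, which reduces to bounding averages of the form $\frac1n\sum_j\big(g^i_n(X_j[I])-g^i(X_j[I])\big)^2$ together with cross terms. This is exactly where Assumption H2 enters: the strong consistency of $g^i_n$ must be leveraged, together with suitable domination or uniform integrability furnished by the nearest-neighbour construction with the cross-validated $r$, to force these averages to zero almost surely. Passing from pointwise strong consistency to control of this random-design average is the delicate point, since $g^i_n$ is itself a function of the entire sample.

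Finally, the statement about $\hat I_n$ is immediate once the previous convergences are in hand: $h(I)=\sum_{k=1}^q p_k h^k(I)$ is a fixed nonnegative linear combination, so $\hat h_n(I)=\sum_{k=1}^q p_k\hat h^k_n(I)\to h(I)$ a.s.\ for every $I\in\mathcal{I}_d$, and the same finite-set argmin lemma yields $\hat I_n\in\tilde{\mathcal{I}}_{q,0}$ ultimately.
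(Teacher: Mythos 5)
Your proposal follows essentially the same route as the paper's proof: the finite-set argmin lemma, the split of $\hat h^k_n(I)\to h^k(I)$ into two eigenvector limits via a.s.\ convergence of covariance matrices and continuity of eigenvectors at simple eigenvalues (the paper cites Dauxois et al.\ for this), and the introduction of an oracle sample built from the true regression functions (the paper's $\mathbf{Z}_j$) with the residual difference controlled by Cauchy--Schwarz and Assumption H2. The ``delicate point'' you flag --- passing from the pointwise strong consistency of $g^i_n$ to the vanishing of the random-design average $\frac1n\sum_j\bigl(g^i_n(X_j[I])-g^i(X_j[I])\bigr)^2$ --- is treated in the paper by simply asserting the implication, so your attempt is, if anything, more candid about where the argument needs care.
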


\begin{proof}
The proof is given in the Appendix.
\end{proof}

\section{Some simulated examples}
In this section we consider two simulated experiments to  analyze
the behavior of our method and compare it with other methods
proposed in the literature.

\subsection{Example 1}
To better understand the heart of our procedure we
start with a simple simulation example in dimension four.

There are two ``hidden" factors:
\begin{eqnarray*}
 && V_1\sim N(0,1.25^2),
\\&& V_2\sim N(0,0.55^2),
\end{eqnarray*}
where $V_1$, $V_2$ are independent.

We construct the 4 observable variables as
follows:
\begin{eqnarray} \label{modelito}
 && X_1=V_1 + \epsilon_1,
\\&& X_2=|V_1| + \epsilon_2, \nonumber
\\&& X_3=V_2 + \epsilon_3, \nonumber
\\&& X_4=V_1 V_2 + \epsilon_4, \nonumber
\end{eqnarray}
where $\varepsilon_j$, $1\leq j\leq4$ are i.i.d. $N(0,\sigma^2)$, with $\sigma=0.01$ (Case 1), $\sigma=0.1$ (Case 2) and $\sigma=0.25$ (Case 3).

When we perform the variable selection it is clear that two variables containing the information given by $V_1$ and $V_2$ should be kept. This means that $\{X_1,X_3\}$, $\{X_1, X_4\}$ or
$\{X_3, X_4\}$ would be ``good" choices while  $\{X_1, X_2\}$ is
clearly a ``bad" choice since only retains information of $V_1$. Additionally, $\{X_2, X_3\}$ (respec. $\{X_2, X_4\}$) is not a ``good'' choice since can not recover all the information of $V_1$ (respec. $V_1$ and $V_2$).

In this example, in all the cases, we compare our procedure with other proposals:
algorithms $B2$ and $B4$ (\citet{J02}), a variable selection approach
proposed by \citet{M84} and sparce PCA introduced by \citet{ZHT06}.
To retain $q$ variables, algorithm $B2$ associates one of the original variables to each of the $p-q$ last PCA vectors and deletes those variables,
while $B4$ associates one of the original
variables to each of the first $q$ PCA vectors and retains those variables.

For each case, we perform 500 replicates and
on each of them we generate samples of size 100 from  model
(\ref{modelito}) and compute the covariance matrix.

\begin{itemize}
	\item \bf Case 1:\rm \ The first two principal components explain between
70 and 85 percent of the total variance. For the \bf BP\rm \ 85\% of the times the maximum angle conformed
by the first two principal components is smaller than 25 degrees, hence we keep two variables.\\
The results exhibitted in Table \ref{results dim 4-0.01} on page ~\pageref{results dim 4-0.01} show that 78\% of the times
the \bf BP\rm \ makes a ``good'' choice of variables, $B2$ only
does it 27\% of the times, while the other methods fail in more than 75\% of the replications.

\begin{table}[!h]
\begin{center}
\caption{Proportion of times where each method selects each pair of variables.(Case 1)}
\label{results dim 4-0.01}
\begin{tabular}{|l|l|l|l|l|l|}
\hline
 & \bf BP\rm & B2  &  B4 & McCabe & SPCA\\ \hline
$X_1,X_2$ & $0.22$ & $0.732$ & $0.76$ & $0.782$ & $0.76$ \\ \hline
$X_1,X_3$ & $0.37$ & $0.004$ & $0.002$ & $0.002$ & $0.002$ \\ \hline
$X_1,X_4$ & $0.366$ & $0.264$ & $0.238$ & $0.216$ & $0.238$ \\ \hline
$X_2,X_3$ & $0$ & $0$ & $0$ & $0$ & $0$ \\ \hline
$X_2,X_4$ & $0$ & $0$ & $0$ & $0$ & $0$ \\ \hline
$X_3,X_4$ & $0.044$ & $0$ & $0$ & $0$ & $0$ \\ \hline
\end{tabular}
\end{center}
\end{table}

  \item \bf Case 2:\rm  \ The first two principal components explain between
69 and 85 percent of the total variance. For the \bf BP\rm \ more than 78\% of the times the maximum angle conformed
by the first two principal components is smaller than 25 degrees, hence we keep two variables.\\
 The results exhibitted in Table \ref{results dim 4-0.1} on page ~\pageref{results dim 4-0.1} show that that 75\% of the times
the \bf BP\rm \ makes a ``good'' choice of variables, $B2$ only
does it 29\% of the times, $B4$ and SPCA 27\%, while McCabe fails in more than  75\% of the replications.

\begin{table}[!h]
\begin{center}
\caption{Proportion of times where each method selects each pair of variables.(Case 2)}
\label{results dim 4-0.1}
\begin{tabular}{|l|l|l|l|l|l|}
\hline
 & \bf BP\rm & B2  &  B4 & McCabe & SPCA\\ \hline
$X_1,X_2$ & $0.248$ & $0.714$ & $0.728$ & $0.752$ & $0.728$ \\ \hline
$X_1,X_3$ & $0.354$ & $0.002$ & $0.002$ & $0.002$ & $0.002$ \\ \hline
$X_1,X_4$ & $0.368$ & $0.284$ & $0.27$ & $0.246$ & $0.27$ \\ \hline
$X_2,X_3$ & $0$ & $0$ & $0$ & $0$ & $0$ \\ \hline
$X_2,X_4$ & $0$ & $0$ & $0$ & $0$ & $0$ \\ \hline
$X_3,X_4$ & $0.03$ & $0$ & $0$ & $0$ & $0$ \\ \hline
\end{tabular}
\end{center}
\end{table}

  \item \bf Case 3:\rm \ The first two principal components explain between
67 and 82 percent of the total variance. For the \bf BP\rm, \ 69\% of the times the maximum angle conformed
by the first two principal components is smaller than 25 degrees, hence we keep two variables.\\
 The results exhibitted in Table \ref{results dim 4-0.25} on page ~\pageref{results dim 4-0.25} show that 74\% of the times the \bf BP\rm \ makes a ``good'' choice of variables, $B2$ does it 51\% of the times, $B4$ and SPCA 48\%, while McCabe 46\%.
\begin{table}[!h]
\begin{center}
\caption{Proportion of times where each method selects each pair of variables.(Case 3)}
\label{results dim 4-0.25}
\begin{tabular}{|l|l|l|l|l|l|}
\hline
 & \bf BP\rm & B2  &  B4 & McCabe & SPCA\\ \hline
$X_1,X_2$ & $0.264$ & $0.488$ & $0.516$ & $0.536$ & $0.516$ \\ \hline
$X_1,X_3$ & $0.192$ & $0.002$ & $0$ & $0$ & $0$ \\ \hline
$X_1,X_4$ & $0.54$ & $0.51$ & $0.484$ & $0.464$ & $0.484$ \\ \hline
$X_2,X_3$ & $0$ & $0$ & $0$ & $0$ & $0$ \\ \hline
$X_2,X_4$ & $0$ & $0$ & $0$ & $0$ & $0$ \\ \hline
$X_3,X_4$ & $0.004$ & $0$ & $0$ & $0$ & $0$ \\ \hline
\end{tabular}
\end{center}
\end{table}

\end{itemize}

From the definition of $X_1$ and $X_2$ we have that $X_2$ is a function of $X_1$ plus an error. We can see from Table \ref{results dim 4-0.01} on page ~\pageref{results dim 4-0.01}, Table \ref{results dim 4-0.1} on page ~\pageref{results dim 4-0.1} and Table \ref{results dim 4-0.25} on page ~\pageref{results dim 4-0.25} that our procedure selects $X_1$ and $X_2$ only between 0.22 and 0.264 proportion of the times, when the other procedures select it in around a 0.5 proportion of the times or more.
In most of the cases our procedure detects that $X_2$ is a
``function" of $X_1$. That is, if $X_1$ is selected to be one of the two explanatory variables, for the other one, the \bf BP\rm, select between $X_3$ and $X_4$.  This way, the \bf BP \rm gains  information about $V_2$ instead of getting redundant information by choosing $X_2$. Note that $\{X_3,X_4\}$ is also a ``good'' choice.

To make the problem a bit more challenging we
enlarge the dimension of the dataset repeating the same variables
plus noisy noninformative errors and also add some i.i.d. noisy
variables. More precisely we consider the following model:

\begin{eqnarray*}
X_j=\left\{
\begin{array}{ccccc}
 V_1+\varepsilon_j, &&\text{ if }  1\leq j \leq 5,
\\ |V_1|+\varepsilon_j, &&\text{ if }  6\leq j \leq 10,
\\ V_2+\varepsilon_j, &&\text{ if }  11\leq j \leq 15,
\\ V_1V_2+\varepsilon_j, &&\text{ if }  16\leq j \leq 20,
\\ \varepsilon_j, &&\text{ if }  21\leq j \leq 23,
\end{array}
\right.
\end{eqnarray*}
where $\varepsilon_j$, $1\leq j\leq 23$ are i.i.d. $N(0,\sigma^2)$, with $\sigma=0.01$ (Case 1), $\sigma=0.1$ (Case 2) and $\sigma=0.25$ (Case 3).

We keep the first and second principal component because on 98\% of the 500 replicates they explain between 70\% and 85\% of the total variance in Case 1, between 70\% and 83\% in Case 2 and between 65\% and 78\% in Case 3. Again we require all the methods to select two
variables. In the three cases, at least 93\% of the times the largest angle is smaller than 25 degrees, hence we consider it is a good decision to look for two variables.

There are five groups of variables $A_1,\ldots,A_5$. Within each
group all the variables only differ on a noisy noninformative
error.
More precisely, the first four groups are $A_j=\{X_k, k=5j-i, i=0,\ldots,4 \}$ $(j=1,\ldots,4)$ and the last one is $A_5=\{X_k, k=21,22,23\}$.
 Clearly $\{\{A_j, A_j\}, \ j=1, \ldots 5\}$, $\{A_1, A_2\}$ and $\{\{A_j, A_5\}, \ j=1, \ldots 4\}$
are ``bad'' choices and also $\{A_2, A_3\}$ and $\{A_2, A_4\}$ are not good choices either.

\begin{table}[!h]
\begin{center}
\caption{Proportion of replicates where each method selects one variable per group of variables.(Case1)}
\label{results dim 23-0.01}
\begin{tabular}{|l|l|l|l|l|l|}
\hline
 & \bf BP\rm & B2  &  B4 & McCabe & SPCA\\ \hline
$A_1,A_1$ & $0.012$ & $0.184$ & $0$ & $0$ & $0$ \\ \hline
$A_1,A_2$ & $0.026$ & $0.202$ & $0.744$ & $0.784$ & $0.744$ \\ \hline
$A_1,A_3$ & $0.214$ & $0.08$ & $0.004$ & $0$ & $0.004$ \\ \hline
$A_1,A_4$ & $0.57$ & $0.114$ & $0.252$ & $0.216$ & $0.252$ \\ \hline
$A_1,A_5$ & $0.028$ & $0.086$ & $0$ & $0$ & $0$ \\ \hline
$A_2,A_2$ & $0$ & $0.076$ & $0$ & $0$ & $0$ \\ \hline
$A_2,A_3$ & $0$ & $0.048$ & $0$ & $0$ & $0$ \\ \hline
$A_2,A_4$ & $0$ & $0.058$ & $0$ & $0$ & $0$ \\ \hline
$A_2,A_5$ & $0$ & $0.032$ & $0$ & $0$ & $0$ \\ \hline
$A_3,A_3$ & $0$ & $0.016$ & $0$ & $0$ & $0$ \\ \hline
$A_3,A_4$ & $0.124$ & $0.02$ & $0$ & $0$ & $0$ \\ \hline
$A_3,A_5$ & $0$ & $0.016$ & $0$ & $0$ & $0$ \\ \hline
$A_4,A_4$ & $0$ & $0.036$ & $0$ & $0$ & $0$ \\ \hline
$A_4,A_5$ & $0$ & $0.026$ & $0$ & $0$ & $0$ \\ \hline
$A_5,A_5$ & $0.026$ & $0.006$ & $0$ & $0$ & $0$ \\ \hline
\end{tabular}
\end{center}
\end{table}

\begin{table}[!h]
\begin{center}
\caption{Proportion of replicates where each method selects one variable per group of variables.(Case2)}
\label{results dim 23-0.1}
\begin{tabular}{|l|l|l|l|l|l|}
\hline
 & \bf BP\rm & B2  &  B4 & McCabe & SPCA\\ \hline
$A_1,A_1$ & $0.006$ & $0.188$ & $0$ & $0$ & $0$ \\ \hline
$A_1,A_2$ & $0.024$ & $0.218$ & $0.744$ & $0.79$ & $0.74$ \\ \hline
$A_1,A_3$ & $0.254$ & $0.07$ & $0.006$ & $0$ & $0.006$ \\ \hline
$A_1,A_4$ & $0.598$ & $0.114$ & $0.25$ & $0.21$ & $0.25$ \\ \hline
$A_1,A_5$ & $0.026$ & $0.076$ & $0$ & $0$ & $0$ \\ \hline
$A_2,A_2$ & $0$ & $0.074$ & $0$ & $0$ & $0$ \\ \hline
$A_2,A_3$ & $0$ & $0.044$ & $0$ & $0$ & $0$ \\ \hline
$A_2,A_4$ & $0$ & $0.072$ & $0$ & $0$ & $0$ \\ \hline
$A_2,A_5$ & $0$ & $0.03$ & $0$ & $0$ & $0$ \\ \hline
$A_3,A_3$ & $0$ & $0.014$ & $0$ & $0$ & $0$ \\ \hline
$A_3,A_4$ & $0.092$ & $0.026$ & $0$ & $0$ & $0$ \\ \hline
$A_3,A_5$ & $0$ & $0.012$ & $0$ & $0$ & $0$ \\ \hline
$A_4,A_4$ & $0$ & $0.034$ & $0$ & $0$ & $0$ \\ \hline
$A_4,A_5$ & $0$ & $0.022$ & $0$ & $0$ & $0$ \\ \hline
$A_5,A_5$ & $0$ & $0.006$ & $0$ & $0$ & $0$ \\ \hline
\end{tabular}
\end{center}
\end{table}

\begin{table}[!h]
\begin{center}
\caption{Proportion of replicates where each method selects one variable per group of variables.(Case3)}
\label{results dim 23-0.25}
\begin{tabular}{|l|l|l|l|l|l|}
\hline
 & \bf BP\rm & B2  &  B4 & McCabe & SPCA\\ \hline
$A_1,A_1$ & $0.028$ & $0.17$ & $0$ & $0$ & $0$ \\ \hline
$A_1,A_2$ & $0.004$ & $0.234$ & $0.736$ & $0.762$ & $0.728$ \\ \hline
$A_1,A_3$ & $0.292$ & $0.094$ & $0.008$ & $0.004$ & $0.008$ \\ \hline
$A_1,A_4$ & $0.604$ & $0.104$ & $0.256$ & $0.234$ & $0.256$ \\ \hline
$A_1,A_5$ & $0.026$ & $0.084$ & $0$ & $0$ & $0$ \\ \hline
$A_2,A_2$ & $0$ & $0.072$ & $0$ & $0$ & $0$ \\ \hline
$A_2,A_3$ & $0$ & $0.032$ & $0$ & $0$ & $0$ \\ \hline
$A_2,A_4$ & $0$ & $0.07$ & $0$ & $0$ & $0$ \\ \hline
$A_2,A_5$ & $0$ & $0.03$ & $0$ & $0$ & $0$ \\ \hline
$A_3,A_3$ & $0$ & $0.022$ & $0$ & $0$ & $0$ \\ \hline
$A_3,A_4$ & $0.046$ & $0.024$ & $0$ & $0$ & $0$ \\ \hline
$A_3,A_5$ & $0$ & $0.012$ & $0$ & $0$ & $0$ \\ \hline
$A_4,A_4$ & $0$ & $0.04$ & $0$ & $0$ & $0$ \\ \hline
$A_4,A_5$ & $0$ & $0.006$ & $0$ & $0$ & $0$ \\ \hline
$A_5,A_5$ & $0$ & $0.006$ & $0$ & $0$ & $0$ \\ \hline
\end{tabular}
\end{center}
\end{table}

Table \ref{results dim 23-0.01} on page ~\pageref{results dim 23-0.01}, Table \ref{results dim 23-0.1} on page ~\pageref{results dim 23-0.1}, and Table \ref{results dim 23-0.25} on page ~\pageref{results dim 23-0.25} exhibit the proportion of replicates where each method selects one variable per group of variables. In the three cases the \bf BP\rm \  achieves the desired results more than 90\%  of the times, while the other
methods fail more than 73\% of the times.

\subsection{Example 2}
\citet{ZHT06} introduced the following simulation
example. They have two ``hidden" factors:
\begin{eqnarray*}
 && V_1\sim N(0,290),
\\&& V_2\sim N(0,300),
\end{eqnarray*}
and a lineal combination of them,
$$ V_3=-0.3V_1+0.925V_2+\varepsilon$$
where $V_1$, $V_2$ and $\varepsilon$ are independent, $\varepsilon \sim N(0,1)$.

Then 10 observable variables are constructed as follows:

\begin{eqnarray*}
X_j=\left\{
\begin{array}{ccc}
 V_1+\varepsilon_j, &&\text{ if }  1\leq j \leq 4,
\\ V_2+\varepsilon_j, &&\text{ if }  5\leq j \leq 8,
\\ V_3+\varepsilon_j, &&\text{ if }  j=9,10,
\end{array}
\right.
\end{eqnarray*}
where $\varepsilon_j$, $1\leq j\leq10$ are i.i.d. $N(0,1)$.

\citet{ZHT06} used the true covariance matrix of
$(X_1,\ldots,X_{10})$ to perform PCA, SPCA and Simple Thresholding.

There are three groups of variables that share the same
information. The first group, which we denote by $A_1$ corresponds to the variables $X_1$ to
$X_4$, the second one, $A_2$, to the variables $X_5$ to $X_8$, and the
third group, $A_3$, to the variables $X_9$ and $X_{10}$. By definition
$V_3$ is also a function of $V_1$ and $V_2$, up to a noisy
noninformative error.

\citet{ZHT06} show that SPCA correctly identifies the sets of
important variables, the first SPCA identifies the group of the variables $A_2$ and the second SPCA the group of the variables
$A_1$. Simple Thresholding incorrectly mixes two variables of $A_2$ with two of $A_3$.

We first consider the population version, using the true
covariance matrix,  where the first two principal components explain $99.6\%$ of the total variance.

This example is outside of the theoretical framework stated in section 2 but this will not be a problem for the implemetation of our procedure.

Clearly it is not adecuate to keep one variable because when the cardinality of $I$ is $1$, $\Sigma_{\mathbf{Y}^{I}}$ has only one positive eigenvalue and then we can not recover two principal components.


If we choose a subset $I$ of cardinal 2, then $\Sigma_{\mathbf{Y}^{I}}$ has two positive eigenvalues of multiplicity one and then, we can recover two principal components. If we select one variable of each of the three groups will be a ``good'' solution. A ``bad'' solution should be to choose the 2 variables within the same group.

\begin{table}[!h]
\begin{center}
\caption{Theoretical objective's value $h(I)$ when  two variables are chosen (one of each group).}
\label{theoretical results 2 var}
\begin{tabular}{|l|l|}
\hline & $h(I)$   \\
\hline $A_1,A_1$ & $1.656$ \\
\hline $A_1,A_2$ & $2.257\times10^{-6}$ \\
\hline $A_1,A_3$ & $2.536\times10^{-5}$ \\
\hline $A_2,A_2$ & $1.028$   \\
\hline $A_2,A_3$ & $0.001$   \\
\hline $A_3,A_3$ & $1$   \\ \hline
\end{tabular}
\end{center}
\end{table}


In Table \ref{theoretical results 2 var} on page ~\pageref{theoretical results 2 var} we can see that $h(I)$ takes large values (larger than one) for the ``bad" choices and small values (smaller than 0.001) for the ``good" choices. Moreover, for the ``good" choices, the largest angle between the original and the blinded components is less than two dergees in all cases. Then, the \textbf{BP} is going to select a
``good" choice. So our method performs perfectly well when the cardinal of $I$ is $2$.

Next, we consider a more realistic situation and perform a small
simulation, where we estimate the covariance matrix generating a
sample of size 50 which we iterate 500 times. We
perform the analysis  considering the two first principal
components, which explain more than the 99\% of the
variance.

We apply our procedure to select two variables for the first two principal components and in only 4.8\% of the cases the larger angle was above 15 degrees and in 2.2\% of the cases larger than 20 degrees.

In Table \ref{simulation dim 10} on page ~\pageref{simulation dim 10} we can see that 99.2\%  of the times the \bf BP \rm makes a ``good'' choice. In addition McCabe
and SPCA always make a ``good'' choice, $B4$ on 99\% of the times and $B2$ only makes a ``good'' choice in 76.6\% of the times.

\begin{table}[!h]
\begin{center}
\caption{ Proportion of times where each method selects one
variable per group of variables.}
\label{simulation dim 10}
\begin{tabular}{|l|l|l|l|l|l|}
\hline
 & \bf BP\rm & B2  &  B4 & McCabe & SPCA\\ \hline
$A_1,A_1$ & $0$ & $0.134$ & $0.01$ & $0$ & $0$ \\ \hline
$A_1,A_2$ & $0.426$ & $0.57$ & $0.562$ & $1$ & $0.736$ \\ \hline
$A_1,A_3$ & $0.222$ & $0.108$ & $0.428$ & $0$ & $0.264$ \\ \hline
$A_2,A_2$ & $0$ & $0.096$ & $0$ & $0$ & $0$ \\ \hline
$A_2,A_3$ & $0.344$ & $0.088$ & $0$ & $0$ & $0$ \\ \hline
$A_3,A_3$ & $0.008$ & $0.004$ & $0$ & $0$ & $0$ \\ \hline
\end{tabular}
\end{center}
\end{table}

\section{A real data example}

We consider a dataset obtained from the University of California,
Irvine repository (\citet{UCI}) as an example. This dataset contains the values of
six bio-mechanical characteristic that were used to classify
orthopaedic patients into three classes (normal, disc hernia and
spondylolisthesis groups), and includes data for one hundred
normal patients, sixty patients with a disc hernia, and one
hundred and fifty patients with spondylolisthesis. Each patient is
represented in the dataset in terms of the six bio-mechanical
attributes associated with the shape and orientation of the pelvis
and the lumbar spine, namely (a) pelvic
incidence, (b) pelvic tilt, (c) lumbar lordosis angle, (d) sacral
slope, (e) pelvic radius and (f) grade of spondylolisthesis.

We use our procedure to select one variable for the first two
principal components, which explain 85\% of the variance. We
give the same weight to all components, that is $p_k=1/2$ in
equation (\ref{objetivo}), and the ``grade of spondylolisthesis", variable (f) is selected. In this case the largest angle is $7.5$ degrees,  hence we decide to keep one variable. Cross-validation finds 55 nearest neighbours for variables (a) and
(b), 70 for variable (c), 102 for variable (d) and 39 for variable
(e), while the empirical objective function (\ref{empiricalOBJ}) attains the value 0.017. We use the
Euclidean distance in our procedure, but with the Mahalanobis
distance we get the same results.

\begin{figure}[h!]
\begin{center}
\includegraphics[width=5cm]{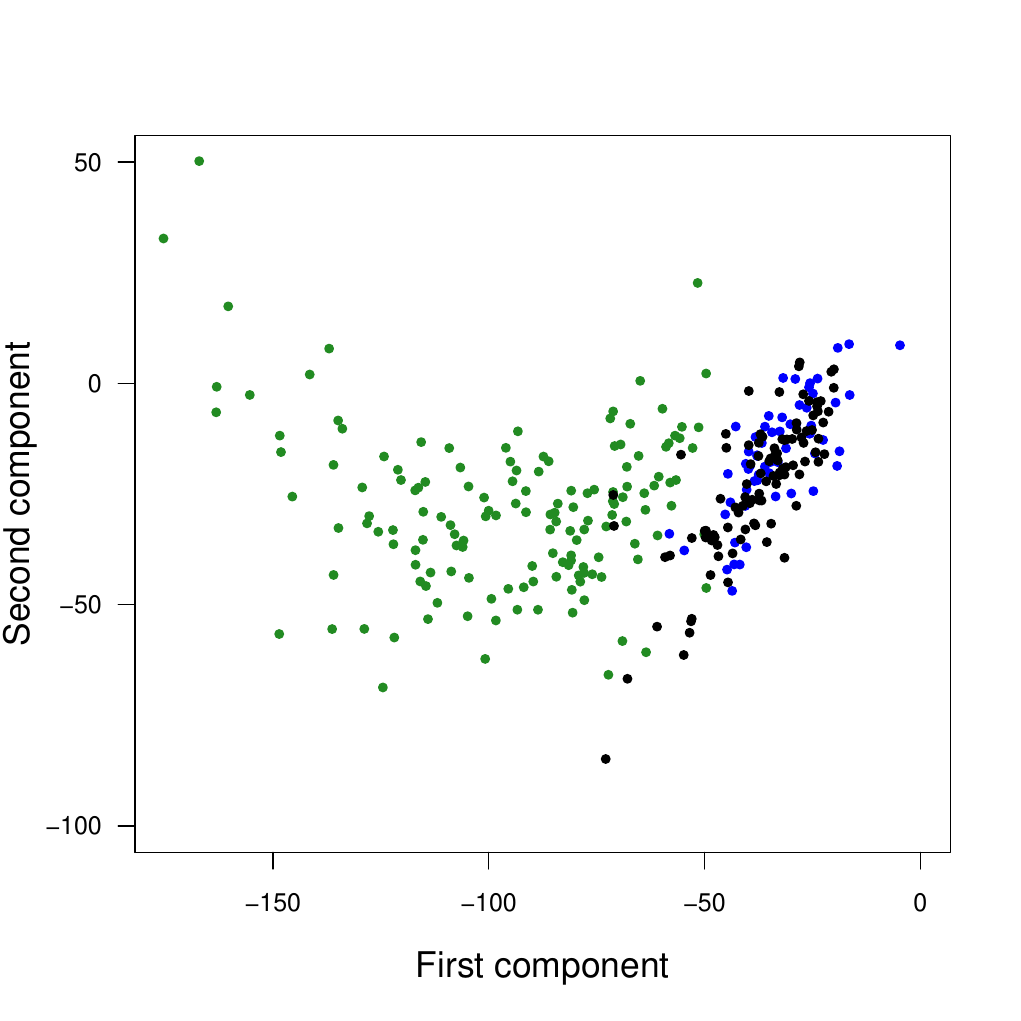}
\includegraphics[width=5cm]{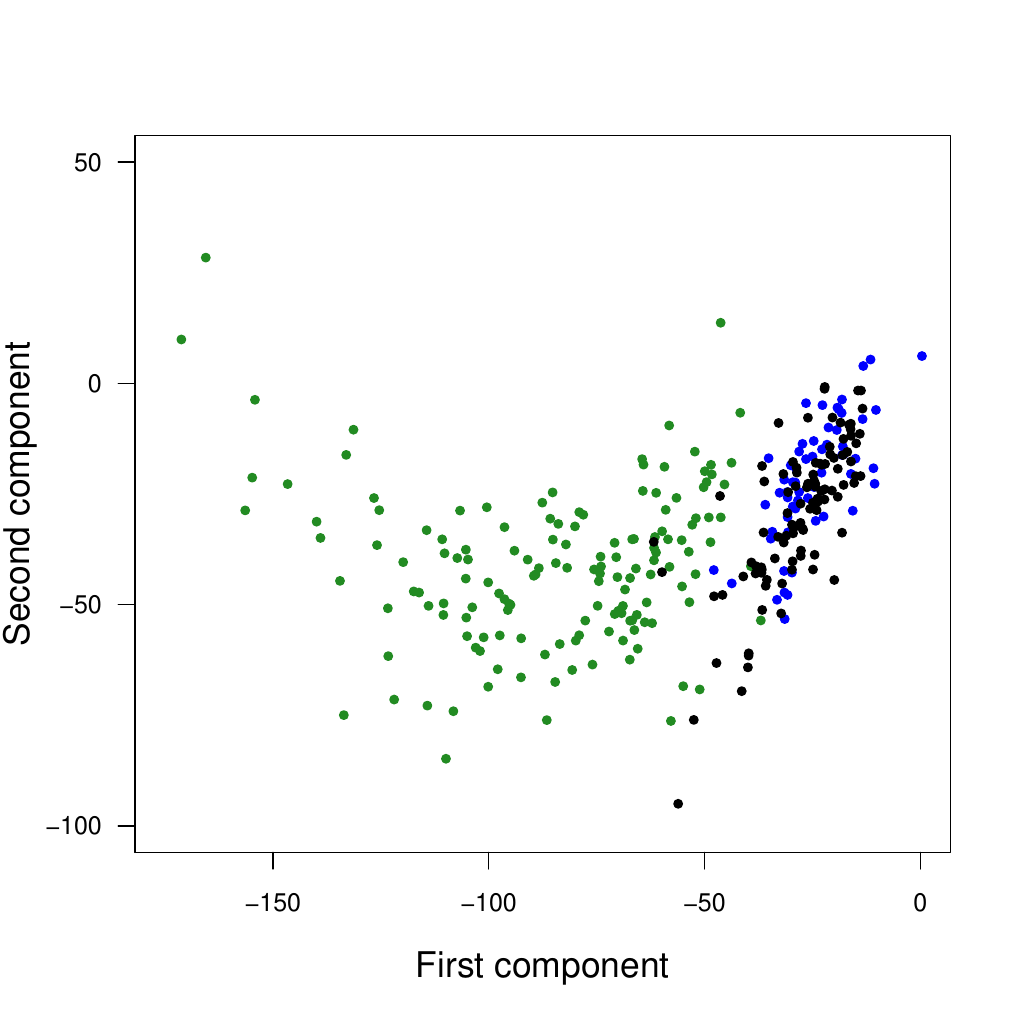}
\end{center}
\caption{Left: Map of the projection of the data on the original
principal components. Right: Map of the projection of the data on
the blinded based principal components. Blue: Disk Hernia
patients. Black: Normal patients. Green: Spondylolisthesis
patients.}\label{alldata 1var}
\end{figure}

The plot on the first two principal components for the example data
looks very similar to the plot on the principal components calculated using
our procedure (Figure \ref{alldata 1var}). It also shows that the
patients with spondylolisthesis are separated from the rest of the
patients, but the normal patients and the patients with disc
hernias are mixed up together.

Next, we calculate the two principal components using the
traditional method and our procedure for the subset of normal and
disc hernia patients, using now two variables. The first two
principal components  explain 75\%  of the variance then we decide
to keep them. We discard to select one variable because in that
case the largest angle is  close to $78$ degrees. For two
variables the largest angle is close to $21$
degrees and for three variables the angle does not decrease very
much ($19$ degrees), so we decide to  keep two variables.

We consider both, the Euclidean and the Mahalanobis distance, and in both cases the variables selected are ``lumbar lordosis angle"
and ``pelvic radius". The value of the objective function (\ref{empiricalOBJ}) with the Euclidean distance is $0.125$,
while with the Mahalanobis distance is $0.141$. Figure
\ref{dhnodata 2var} shows that the plots for the principal
components produced by these procedures look very similar.

\begin{figure}
\includegraphics[width=3.5cm]{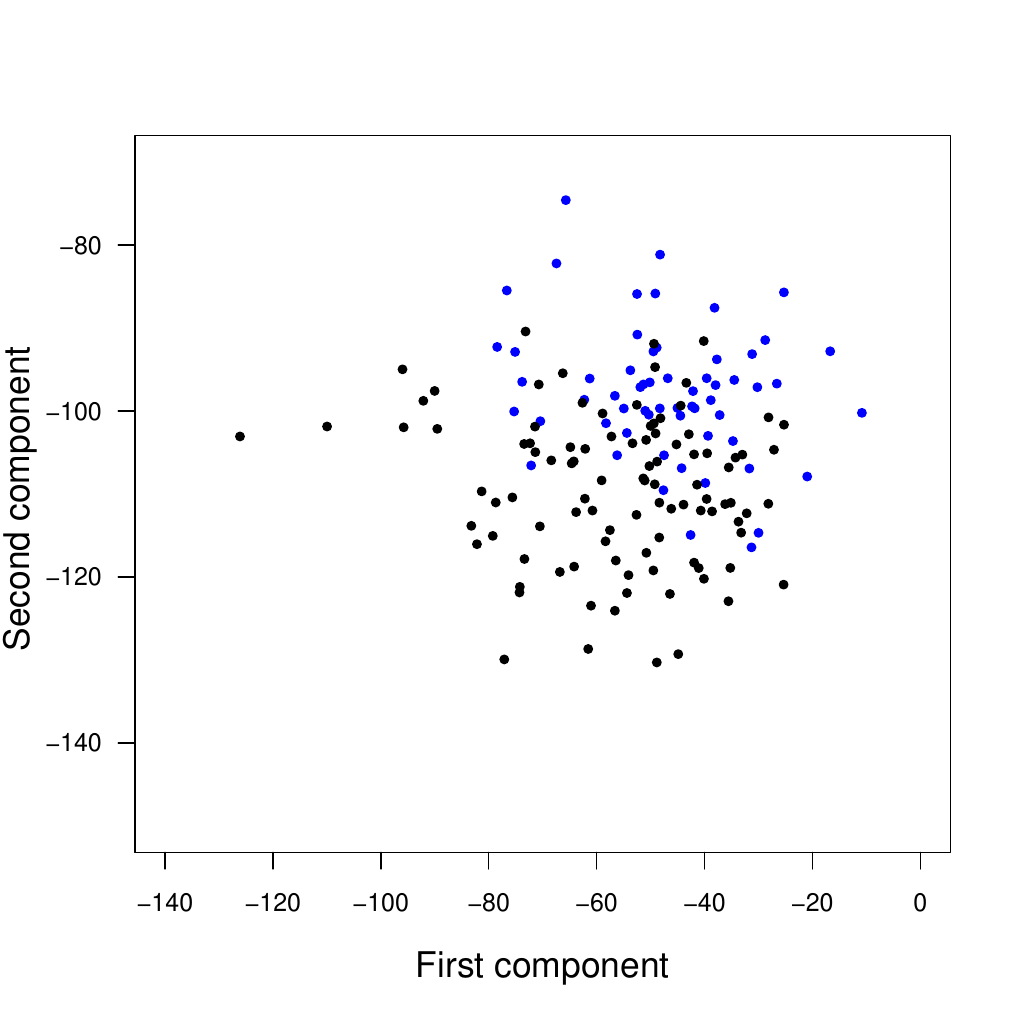}
\includegraphics[width=3.5cm]{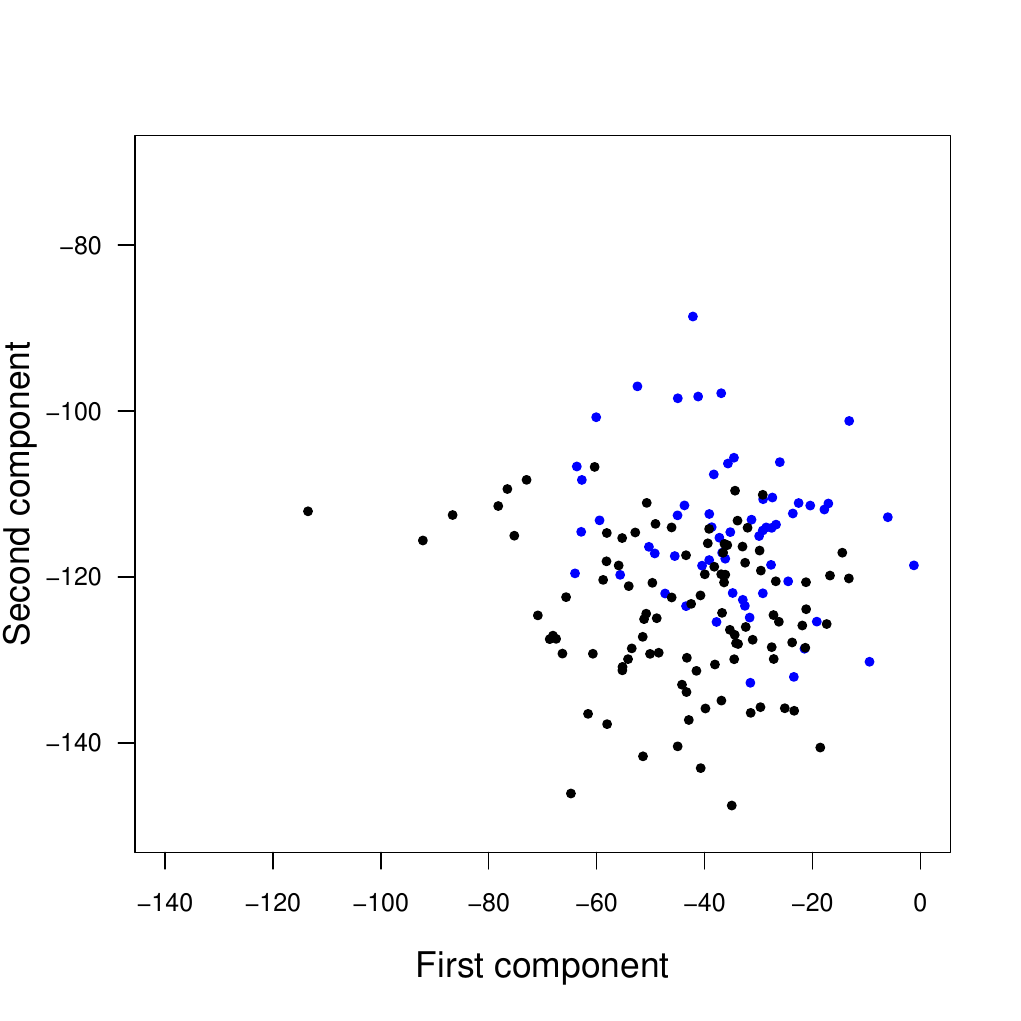}
\includegraphics[width=3.5cm]{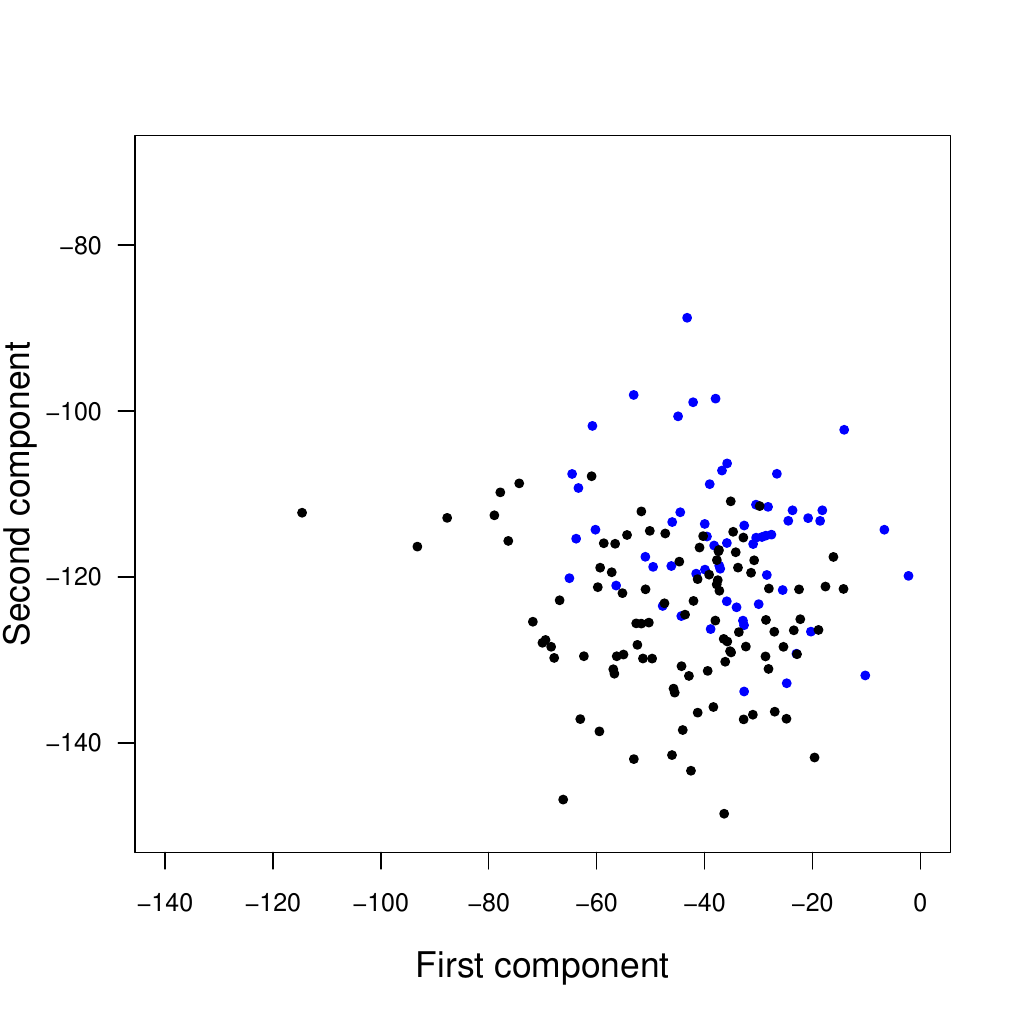}
\caption{Left: Map of the projection of the data on the original
principal components. Middle: Map of the projection of the data on
the blinded based principal components using Euclidean distance.
Right: Map of the projection of the data on the blinded based
principal components using Mahalanobis distance. Blue: Disk Hernia
patients. Black: Normal patients.}
\label{dhnodata 2var}
\end{figure}

\section{Conclusions}

In this paper we introduce a new variable selection procedure for PCA that aim to gain interpretation in principal components.

On one hand we consider a local approach where we deals with finding a subset of variables that best explains each principal component, and on the other hand we analyzed a global approach when the objective is to find a unique subset to explain the first $q$ principal components at once.

We explained how through the conditional expectation we can choose the variables.

Under wide regular conditions in the conditional expectation estimates and on the covariance matrix, strong consistency results are studied and numerical aspects have also been analysed.

The performance of the procedure has been compared with several well-known variable selection techniques using real and simulated data sets, showing the strengths of the method.

%
%
%

\section*{Appendix: Proof of Theorem \ref{consistencia}.}

First we are going to proof the following Proposition:

\begin{proposition*}
For $k\in\{1,...,p\}$, if
\begin{eqnarray}
\label{convergenciah}
\hat{h}^k_n(I)\rightarrow h^k(I) \text{ a.s. for all } I \in \mathcal{I}_d,
\end{eqnarray}
then it converges uniformly, and
\begin{eqnarray*}
arg \min_{I \in \mathcal{I}_d} \hat{h}^k_n(I) \rightarrow arg \min_{I \in \mathcal{I}_d} h^k(I) \text{ a.s. }
\end{eqnarray*}
\end{proposition*}

\begin{proof}
Since $\mathcal{I}_d$ is a discrete and finite space , the convergence of (\ref{convergenciah}) is uniform.

For the sake of simplicity, let $k=1$ (the proof is analogue for $k\in\{2,...,p\}$).

For all $\delta>0$ there exists $n_0(\omega)$ such that for every $n\geq n_0(\omega)$, with probability one

\begin{eqnarray*}
\left|\hat{h}^1_n(I)-h^1(I)\right|<\delta \text{ for all } I\in\mathcal{I}_d,
\end{eqnarray*}

then,
\begin{eqnarray}
\label{cotahn}
h^1(I)-\delta<\hat{h}^1_n(I)<h^1(I)+\delta \text{ for all } I\in\mathcal{I}_d
\end{eqnarray}

and

\begin{eqnarray}
\label{cotah}
\hat{h}^1_n(I)-\delta<h^1(I)<\hat{h}^1_n(I)+\delta \text{ for all } I\in\mathcal{I}_d.
\end{eqnarray}

From (\ref{henobjetivo1}), we know that exists $\delta_0>0$ such that

\begin{eqnarray*}
\label{hI1}
h^1(I_1)<h^1(I)-\delta_0 \text{ for all } I\notin \mathcal{I}_{1,0}, \text{ for all } I_1 \in \mathcal{I}_{1,0}.
\end{eqnarray*}

By choosing in (\ref{cotahn}) $\delta=\frac{\delta_0}{2}$ we obtain

\begin{eqnarray*}
\hat{h}^1_n(I_1)<h^1(I_1)+\frac{\delta_0}{2}<h^1(I)-\delta_0+\frac{\delta_0}{2}
=h^1(I)-\frac{\delta_0}{2}<\hat{h}^1_n(I)
\end{eqnarray*}

for all $I\notin \mathcal{I}_{1,0}$, for all $I_1 \in \mathcal{I}_{1,0}$, if $n\geq n_0(\omega)$ with  probability one.

That means, that there exists $n_0(\omega)$ such that, if $n\geq n_0(\omega)$, with probability one, if we choose $ I_1 \in \mathcal{I}_{1,0}$, then $I_1$ minimizes $\hat{h}^1_n(I)$.

Moreover, from (\ref{empirical}), we have that for each $n$ fix, exists $\delta_1>0$, such that

\begin{eqnarray*}
\hat{h}^1_n(I^1)<\hat{h}^1_n(I)-\delta_1 \text{ for all } I\notin \hat I_{1n}, \text{ for all } I^1 \in \hat I_{1n}.
\end{eqnarray*}

Let $\delta=\frac{\delta_1}{2}$ in (\ref{cotah}) and $n\geq n_1(\omega)$,

\begin{eqnarray*}
h^1(I^1)<\hat{h}^1_n(I^1)+\frac{\delta_1}{2}<\hat{h}^1_n(I)-\delta_1+\frac{\delta_1}{2}
=\hat{h}^1_n(I)-\frac{\delta_1}{2}<h^1(I)
\end{eqnarray*}

for all $I\notin \hat I_{1n}$, and for all $I^1 \in \hat I_{1n}$ with probability one.

That means, that exists $n_1(\omega)$ such that, if $n\geq n_1(\omega)$ for all $ I^1 \in \hat I_{1n}$, $I^1$ minimizes $h^1(I)$ with probability one.

In conclusion, if $n>max\{n_0,n_1\}$, for all $ I_1 \in \mathcal{I}_{1,0}$, $I_1$ minimizes $\hat{h}^1_n(I)$ and  for all $ I^1 \in \hat I_{1n}$, $I^1$ minimizes $h^1(I)$ with probability one.

As the proof is analogue for $k\in\{2,...,p\}$,
\begin{equation*}
\label{objetivo1} \mathcal{I}_{k,0} = argmin_{I \in \mathcal{I}_d} h^k(I),
\end{equation*}
and
$$\hat I_{kn} = argmin_{I \in \mathcal{I}_d} \hat h^k_n(I)$$
this implies,
\begin{eqnarray*}
arg \min_{I \in \mathcal{I}_d} \hat{h}^k_n(I) \rightarrow arg \min_{I \in \mathcal{I}_d} h^k(I) \text{ a.s. for } k\in\{1,...,p\}.
\end{eqnarray*}
\end{proof}

\begin{proof}\textbf{(of Theorem)}

To prove the Theorem \ref{consistencia} then it is enough to see that for each $I$ the empirical objective function (\ref{empirical}) converges a.s. to the theoretical objective function (\ref{functionhk}). To prove it, let us see that
\begin{eqnarray*}
\left\|\hat{\alpha}_n^k(\mathbb{P}_n)-\hat{\alpha}_n^k(\mathbb{P}_{n,\textbf{Y}^I})\right\|\rightarrow\left\|\alpha^k(\mathbb{P})-\alpha^k(\mathbb{P}_{\textbf{Y}^I})\right\| \ \text{ a.s. for all } k.
\end{eqnarray*}

\citet{daux} proved that under \textbf{H1}, if
\begin{equation*}
\label{convergencia covarianza}
\left\Vert\hat{\Sigma}_n-\Sigma\right\Vert_{\infty}=
\underset{\left\Vert u \right\Vert = 1}{max}\left\Vert
\left(\hat{\Sigma}_n-\Sigma \right) \left(u\right)\right\Vert
\rightarrow 0 \ a.s.,
\end{equation*}
then
\begin{eqnarray*}
\left\|\hat{\alpha}^k_n(\mathbb{P}_n) - \alpha^k(\mathbb{P})\right\| \rightarrow 0 \ \text{ a.s. for all } 1\leq k\leq p,
\end{eqnarray*}
where $\hat{\alpha}^k_n(\mathbb{P}_n)$ (respec. $\alpha^k(\mathbb{P})$)
are the eigenvectors of $\hat{\Sigma}_n$, that denotes the empirical covariance matrix associated with $\mathbb{P}_n$ (respec. $\Sigma$ denotes the covariance matrix associated with $\mathbb{P}$).

So, if we prove that
\begin{equation*}
\label{convergencia} \underset{\left\Vert u \right\Vert =
1}{max}\left\Vert \left(\hat{\Sigma}_n(I)-\Sigma(I) \right)
\left(u\right)\right\Vert \rightarrow 0 \ a.s.
\end{equation*}
then
\begin{equation*}
\Vert \hat{\alpha}^k_n(\mathbb{P}_{n,\mathbf{Y}^{I}}) -
\alpha^k(\mathbb{P}_{\mathbf{Y}^{I}})\Vert \to 0 \ \ a.s.  \ \ \forall
1\leq k\leq p ,
\end{equation*}
where $\hat{\alpha}^k_n(\mathbb{P}_{n,\mathbf{Y}^{I}})$ (respec. $\alpha^k(\mathbb{P}_{\mathbf{Y}^{I}})$)
are the eigenvectors of $\hat{\Sigma}_n(I)$, that denotes the empirical covariance matrix associated with $\mathbb{P}_{n,\mathbf{Y}^{I}}$ (respec. $\Sigma(I)$ denotes the covariance matrix associated with $\mathbb{P}_{\mathbf{Y}^{I}}$).

Note, that $I=\{1,\ldots,p\}$ is the classic case. We will show that this also holds for any $I\subseteq \{1, \ldots,
p\}$, that is

\begin{equation*}
\label{convergencia} \underset{\left\Vert u \right\Vert =
1}{max}\left\Vert \left(\hat{\Sigma}_n(I)-\Sigma(I) \right)
\left(u\right)\right\Vert \rightarrow 0 \ a.s.
\end{equation*}

To simplify the notation, we can assume (without losing generality)
that $I=\{1,\ldots, d\}$, then
\begin{equation}
\nonumber
\mathbf{Y}_j= \left(X_j[1],\ldots,X_j[d],g^1_n(\mathbf{X}_j[I]),
\ldots,g^{p-d}_n(\mathbf{X}_j[I])\right)^t,
\end{equation}
where
$g^i_n(z)$ is a uniformly consistent nonparametric estimate of $g^i(z)$. Specifically, $g^i_n$ will fulfil the following assumption,
\begin{eqnarray*}
g_n^{l}(\mathbf{X}_j[I]) \rightarrow g^{l}(\mathbf{X}_j[I]) \ a.s.\text{ for any $l$, for
all $j$, uniformly}.
\end{eqnarray*}
We define a non observable auxiliary vector,
\begin{equation}
\nonumber \mathbf{Z}_j=\left(X_j[1], \ldots,X_j[d],g^1(\mathbf{X}_j[I]),
\ldots, g^{p-d}(\mathbf{X}_j[I])\right)^t,
\end{equation}
where $g^i(z)=E(X[d+i]|\mathbf X[I]=z)$.

The proof will be complete if we show that

\begin{equation}\label{convergencia1}
\underset{\left\Vert u \right\Vert = 1}{max}\left\Vert
\left(\Sigma_{\mathbf{Z}}-\Sigma(I) \right)
\left(u\right)\right\Vert \rightarrow 0 \ a.s.,
\end{equation}

and that

\begin{equation}\label{convergencia2}
\underset{\left\Vert u \right\Vert = 1}{max}\left\Vert
\left(\Sigma_{\mathbf{Y}} - \Sigma_{\mathbf{Z}} \right)
\left(u\right)\right\Vert \rightarrow 0 \ a.s.,
\end{equation}

where $\Sigma_{\mathbf{Y}}=\hat{\Sigma}_n(I)$.\\

Considering,
{\footnotesize{
$$\overline{X[i]}=\frac{1}{n}\sum_{j=1}^n X_{j}[i],\quad
\overline{g_n^{l}(\mathbf X[I])}=\frac{1}{n}\sum_{j=1}^n g_n^{l}(\mathbf{X}_j[I]),\quad
\overline{g^{l}(\mathbf X[I])}=\frac{1}{n}\sum_{j=1}^n g^{l}(\mathbf{X}_j[I])
$$}}
the three matrices are
{\footnotesize{
\begin{eqnarray*}
 (\Sigma(I))_{ii'}= \left\{\begin{array}{cccc} cov(X[i],X[i'])
\text{ if} \ 1\leq i,i'\leq d,
\\ cov(X[i],\mathbb{E}(X[i']|\mathbf X[I])) \text{ if } 1\leq i\leq d < i' \leq p,
\\ cov(\mathbb{E}(X[i]|\mathbf X[I]),X[i']) \text{ if }  1 \leq i' \leq d < i \leq p,
\\ cov(\mathbb{E}(X[i]|\mathbf X[I]),\mathbb{E}(X[i']|\mathbf X[I])) \text{ if }  d+1 \leq i,i' \leq
p,
\end{array}
\right.
\end{eqnarray*}}}

 {\footnotesize{
\begin{eqnarray*}
(\Sigma_{\mathbf{Y}})_{ii'}= \left\{\begin{array}{cccc}
\frac{1}{n}\sum_{j=1}^n(X_{j}[i]-\overline{X[i]})(X_{j}[i']-\overline{X[i']})
\text{ if } 1 \leq i,i'\leq d,
\\ \frac{1}{n}\sum_{j=1}^n(X_{j}[i]-\overline{X[i]})(g_n^{i'-d}(\mathbf{X}_j[I])-\overline{g_n^{i'-d}(\mathbf X[I])})  \text{ if } 1\leq i\leq d < i' \leq p,
\\ \frac{1}{n}\sum_{j=1}^n(g_n^{i-d}(\mathbf{X}_j[I])-\overline{g_n^{i-d}(\mathbf X[I])})(X_{j}[i']-\overline{X[i']}) \text{ if }  1 \leq i' \leq
d < i \leq p,
\\ \frac{1}{n}\sum_{j=1}^n(g_n^{i-d}(\mathbf{X}_j[I])-\overline{g_n^{i-d}(\mathbf X[I])})
(g_n^{i'-d}(\mathbf{X}_j[I])-\overline{g_n^{i'-d}(\mathbf X[I])}) \text{ if
}  d+1 \leq i,i' \leq p,
\end{array}
\right.
\end{eqnarray*}}}

{\footnotesize{
\begin{eqnarray*}
(\Sigma_{\mathbf{Z}})_{ii'}= \left\{\begin{array}{cccc}
\frac{1}{n}\sum_{j=1}^n(X_{j}[i]-\overline{X[i]})(X_{j}[i']-\overline{X[i']})
\text{ if } 1 \leq i,i'\leq d,
\\ \frac{1}{n}\sum_{j=1}^n(X_{j}[i]-\overline{X[i]})(g^{i'-d}(\mathbf{X}_j[I])-\overline{g^{i'-d}(\mathbf X[I])})  \text{ if } 1\leq i\leq d < i' \leq p,
\\ \frac{1}{n}\sum_{j=1}^n(g^{i-d}(\mathbf{X}_j[I])-\overline{g^{i-d}(\mathbf X[I])})(X_{j}[i']-\overline{X[i']}) \text{ if }  1 \leq i' \leq
d < i \leq p,
\\ \frac{1}{n}\sum_{j=1}^n(g^{i-d}(\mathbf{X}_j[I])-\overline{g^{i-d}(\mathbf X[I])})
(g^{i'-d}(\mathbf{X}_j[I])-\overline{g^{i'-d}(\mathbf X[I])}) \text{ if }
d+1 \leq i,i' \leq p.
\end{array}
\right.
\end{eqnarray*}}}

We will now prove the convergence in (\ref{convergencia1}) and
(\ref{convergencia2}).

First, we show that
\begin{equation*}
\underset{\left\Vert u \right\Vert = 1}{max}\left\Vert
\left(\Sigma_{\mathbf{Z}}-\Sigma(I) \right) \left(u\right)\right\Vert
\rightarrow 0 \ a.s.
\end{equation*}

It is sufficient to prove that each of the coordinates of the matrix
$\Sigma_{\mathbf{Z}}-\Sigma(I)$ converge to zero.

Set $1\leq i,i'\leq p$, then $(\Sigma_{\mathbf{Z}})_{ii'} \rightarrow (\Sigma(I))_{ii'} \ a.s.$ and then
$(\Sigma_{\mathbf{Z}}-\Sigma(I))_{ii'}\rightarrow 0 \ a.s.$ $\forall i,i'=1,...,p$.

Because we are using a finite dimensional space, and each of the
coordinates converge to zero, it holds that

\begin{equation*}
\underset{\left\Vert u \right\Vert = 1}{max}\left\Vert
\left(\Sigma_{\mathbf{Z}}-\Sigma(I) \right) \left(u\right)\right\Vert
\rightarrow 0 \ a.s.
\end{equation*}

Now, we show that

\begin{equation*}
\underset{\left\Vert u \right\Vert = 1}{max}\left\Vert
\left(\Sigma_{\mathbf{Y}} - \Sigma_{\mathbf{Z}} \right)
\left(u\right)\right\Vert \rightarrow 0 \ a.s.
\end{equation*}

As before, we are going to prove that each of the coordinates of the matrix
$\Sigma_{\mathbf{Y}}-\Sigma_{\mathbf{Z}}$ converge to zero.\\

For better understanding, we define
$$\cG(l,j,n)=g_n^{l}(\mathbf{X}_j[I])-\overline{g_n^{l}(\mathbf
X[I])}-g^{l}(\mathbf{X}_j[I])+\overline{g^{l}(\mathbf X[I])}.$$

$\bullet$ If $1\leq i,i'\leq d$, $(\Sigma_{\mathbf{Y}})_{ii'}=(\Sigma_{\mathbf{Z}})_{ii'}$ then $(\Sigma_{\mathbf{Y}} - \Sigma_{\mathbf{Z}})_{ii'}=0$.\\

$\bullet$ If $1\leq i\leq d, d+1 \leq i' \leq p$,

\begin{eqnarray*}
(\Sigma_{\mathbf{Y}} - \Sigma_{\mathbf{Z}})_{ii'}
&=&\frac{1}{n}\sum_{j=1}^n(X_{j}[i]-\overline{X[i]})(g_n^{i'-d}(\mathbf{X}_j[I])-\overline{g_n^{i'-d}(\mathbf
X[I])}-g^{i'-d}(\mathbf{X}_j[I])+\overline{g^{i'-d}(\mathbf X[I])})\\
&=&\frac{1}{n}\sum_{j=1}^n(X_{j}[i]-\overline{X[i]})\cG(i'-d,j,n).
\end{eqnarray*}

From the Cauchy-Schwarz inequality, we have that

\begin{eqnarray*}
\left|\frac{1}{n}\sum_{j=1}^n(X_{j}[i]-\overline{X[i]})\cG(i'-d,j,n)\right|\leq
\left( \frac{1}{n}\sum_{j=1}^n(X_{j}[i]-\overline{X[i]})^2
\right)^ \frac{1}{2} \left(\frac{1}{n}\sum_{j=1}^n(\cG(i'-d,j,n))^2\right)^\frac{1}{2}
\end{eqnarray*}

On the other hand,
\begin{eqnarray}
\label{conv_var}
\frac{1}{n}\sum_{j=1}^n(X_{j}[i]-\overline{X[i]})^2 \rightarrow
Var(X[i]) \ a.s.
\end{eqnarray}

In addition, since
\begin{eqnarray*}
g_n^{l}(\mathbf{X}_j[I]) \rightarrow g^{l}(\mathbf{X}_j[I]) \ a.s. \text{ uniformly, for any $l$,
for all $j$},
\end{eqnarray*}
then for any l,
\begin{eqnarray*}
\overline{g_n^{l}(\mathbf{X}[I])}-\overline{g^{l}(\mathbf{X}[I])}=\frac{1}{n}\sum_{j=1}^n g_n^{l}(\mathbf{X}_j[I])-g^{l}(\mathbf{X}_j[I])\leq \max_{1\leq j\leq n} \{g_n^{l}(\mathbf{X}_j[I])-g^{l}(\mathbf{X}_j[I])\} \rightarrow 0 \ a.s.
\end{eqnarray*}

and therefore,

\begin{eqnarray*}
\cG(l,j,n)\rightarrow 0 \ a.s. \text{ for any $l$,
for all $j$}.
\end{eqnarray*}

This implies

\begin{eqnarray}
\label{conv_g_particular}
\frac{1}{n}\sum_{j=1}^n (\cG(l,j,n))^2\rightarrow 0 \ a.s. \text{ for any $l$}.
\end{eqnarray}

Thus, by (\ref{conv_var}) and  (\ref{conv_g_particular}) we have

\begin{eqnarray}
\label{ultima_convergencia}
\left|\frac{1}{n}\sum_{j=1}^n(X_{j}[i]-\overline{X[i]})\cG(i'-d,j,n)\right| \rightarrow 0 \ a.s.
\end{eqnarray}

which is what we required for our proof.\\

$\bullet$ If $d+1 \leq i \leq p ,1 \leq i' \leq d$, from the simetry of the matrices and (\ref{ultima_convergencia}), we have that $(\Sigma_{\mathbf{Y}} - \Sigma_{\mathbf{Z}})_{ii'}\rightarrow 0 \ a.s.$\\

$\bullet$ If $d+1 \leq i \leq p , d+1 \leq i' \leq p$,

\begin{eqnarray*}
(\Sigma_{\mathbf{Y}} - \Sigma_{\mathbf{Z}})_{ii'}&=&
\frac{1}{n}\sum_{j=1}^n\left[(g_n^{i-d}(\mathbf{X}_j[I])-\overline{g_n^{i-d}(\mathbf X[I])})(g_n^{i'-d}(\mathbf{X}_j[I])-\overline{g_n^{i'-d}(\mathbf{X}[I])})\right]-\\
&& \left[(g^{i-d}(\mathbf{X}_j[I])-\overline{g^{i-d}(\mathbf X[I])})(g^{i'-d}(\mathbf{X}_j[I])-\overline{g^{i'-d}(\mathbf
X[I])})\right].
\end{eqnarray*}

If we add and subtract $(g^{i-d}(\mathbf{X}_j[I])-\overline{g^{i-d}(\mathbf X[I])})(g_n^{i'-d}(\mathbf{X}_j[I])-\overline{g_n^{i'-d}(\mathbf{X}[I])})$, and rearrange, we get that $(\Sigma_{\mathbf{Y}} - \Sigma_{\mathbf{Z}})_{ii'}$ is the sum of
\begin{eqnarray}
\label{primerasuma}
\frac{1}{n}\sum_{j=1}^n (g_n^{i'-d}(\mathbf{X}_j[I])-\overline{g_n^{i'-d}(\mathbf{X}[I])})\cG(i-d,j,n)
\end{eqnarray}
and
\begin{eqnarray*}
\frac{1}{n}\sum_{j=1}^n (g^{i-d}(\mathbf{X}_j[I])-\overline{g^{i-d}(\mathbf X[I])}) \cG(i'-d,j,n)
\end{eqnarray*}

were (\ref{primerasuma}) can be rewrited as
\begin{eqnarray*}
\frac{1}{n}\sum_{j=1}^n [\cG(i'-d,j,n)+(g^{i'-d}(\mathbf{X}_j[I])-\overline{g^{i'-d}(\mathbf X[I])})]\cG(i-d,j,n)
\end{eqnarray*}

or

\begin{eqnarray*}
\frac{1}{n}\sum_{j=1}^n \cG(i'-d,j,n)\cG(i-d,j,n)
+
\frac{1}{n}\sum_{j=1}^n (g^{i'-d}(\mathbf{X}_j[I])-\overline{g^{i'-d}(\mathbf X[I])})\cG(i-d,j,n).
\end{eqnarray*}

Then we have that
\begin{eqnarray*}
(\Sigma_{\mathbf{Y}} - \Sigma_{\mathbf{Z}})_{ii'}&=&
\frac{1}{n}\sum_{j=1}^n \cG(i'-d,j,n)\cG(i-d,j,n)\\
&+&
\frac{1}{n}\sum_{j=1}^n (g^{i'-d}(\mathbf{X}_j[I])-\overline{g^{i'-d}(\mathbf X[I])})\cG(i-d,j,n)\\
&+&
\frac{1}{n}\sum_{j=1}^n (g^{i-d}(\mathbf{X}_j[I])-\overline{g^{i-d}(\mathbf X[I])}) \cG(i'-d,j,n).
\end{eqnarray*}

From triangular and Cauchy-Schwarz inequality,
\begin{eqnarray*}
\left|(\Sigma_{\mathbf{Y}} - \Sigma_{\mathbf{Z}})_{ii'}\right|
&\leq&
\left(\frac{1}{n}\sum_{j=1}^n (\cG(i'-d,j,n))^2\right)^{1/2}
\left(\frac{1}{n}\sum_{j=1}^n (\cG(i-d,j,n))^2\right)^{1/2}\\
&+&
\left(\frac{1}{n}\sum_{j=1}^n (g^{i'-d}(\mathbf{X}_j[I])-\overline{g^{i'-d}(\mathbf X[I])})^2\right)^{1/2}
\left(\frac{1}{n}\sum_{j=1}^n (\cG(i-d,j,n))^2\right)^{1/2}\\
&+&
\left(\frac{1}{n}\sum_{j=1}^n (g^{i-d}(\mathbf{X}_j[I])-\overline{g^{i-d}(\mathbf X[I])})^2\right)^{1/2}
\left(\frac{1}{n}\sum_{j=1}^n (\cG(i'-d,j,n))^2\right)^{1/2}
\end{eqnarray*}

On the other hand, we have that
\begin{eqnarray*}
\frac{1}{n}\sum_{j=1}^n(g^{l}(\mathbf{X}_j[I])-\overline{g^{l}(\mathbf X[I])})^2 \rightarrow Var(g^{l}(\mathbf{X}[I])) \ a.s. \text{ for any $l$}.
\end{eqnarray*}

If $Var(g^{l}(\mathbf{X}[I]))<\infty$ for any $l$ and $I$,
(\ref{conv_g_particular}) entails $\left|(\Sigma_{\mathbf{Y}} - \Sigma_{\mathbf{Z}})_{ii'}\right|\to 0$.\\

As $Var(X[l])<\infty$, then  for any $l$ and any $I$,

$$Var(X[l])=E(Var(X[l]|\mathbf{X}[I]))+Var(E(X[l]|\mathbf{X}[I]))=E(Var(X[l]|\mathbf{X}[I]))+Var(g^{l}(\mathbf{X}[I]))$$

We now that $E(Var(X[l]|\mathbf{X}[I]))>0$, so we conclude that $Var(g^{l}(\mathbf{X}[I]))<\infty$

\bigskip

We have proved that all the coordinates of the matrix
$\Sigma_{\mathbf{Y}}-\Sigma_{\mathbf{Z}}$ converge to zero, so
\begin{equation*}
\underset{\left\Vert u \right\Vert = 1}{sup}\left\Vert
\left(\Sigma_{\mathbf{Y}} - \Sigma_{\mathbf{Z}} \right)
\left(u\right)\right\Vert \rightarrow 0 \ a.s.
\end{equation*}

Now we are ready to complete the proof of Theorem \ref{consistencia}. From (\ref{convergencia1}) and (\ref{convergencia2}) we are able to use the result from \citet{daux} to derive that,

\begin{equation}
\label{convh}
\hat{h}^k_n(I) = \Vert \hat{\alpha}^k_n(\mathbb{P}_n) -
\hat{\alpha}^k_n(\mathbb{P}_{n,\mathbf{Y}^{I}})\Vert^2 \to \Vert \alpha^{k}(\mathbb{P})
- \alpha^k(\mathbb{P}_{\mathbf{Y}^{I}})\Vert^2 = h^k(I) \ \ a.s.
\end{equation}

Indeed, we have that

\begin{eqnarray*}
&&\Vert \hat{\alpha}^k_n(\mathbb{P}_n) -
\hat{\alpha}^k_n(\mathbb{P}_{n,\mathbf{Y}^{I}})\Vert - \Vert \alpha^k(\mathbb{P})
- \alpha^k(\mathbb{P}_{\mathbf{Y}^{I}})\Vert  \leq
\\ &&\Vert
\hat{\alpha}^k_n(\mathbb{P}_n) - \alpha^k(\mathbb{P})\Vert + \Vert \alpha^k(\mathbb{P})
- \alpha^k(\mathbb{P}_{\mathbf{Y}^{I}})\Vert +
\\ &&\Vert
\alpha^k(\mathbb{P}_{\mathbf{Y}^{I}}) -
\hat{\alpha}^k_n(\mathbb{P}_{n,\mathbf{Y}^{I}})\Vert-\Vert \alpha^{k}(\mathbb{P}) -
\alpha^k(\mathbb{P}_{\mathbf{Y}^{I}})\Vert =
\\ &&\Vert
\hat{\alpha}^k_n(\mathbb{P}_n) - \alpha^{k}(\mathbb{P})\Vert+\Vert
\alpha^{k}(\mathbb{P}_{\mathbf{Y}^{I}}) -
\hat{\alpha}^k_n(\mathbb{P}_{n,\mathbf{Y}^{I}})\Vert \to 0 \ a.s.
\end{eqnarray*}

and

\begin{eqnarray*}\nonumber
&&\Vert \alpha^{k}(\mathbb{P}) - \alpha^k(\mathbb{P}_{\mathbf{Y}^{I}})\Vert -\Vert
\hat{\alpha}^k_n(\mathbb{P}_n) -
\hat{\alpha}^k_n(\mathbb{P}_{n,\mathbf{Y}^{I}})\Vert \leq
\\&&\nonumber \Vert \alpha^{k}(\mathbb{P}) - \hat{\alpha}^k_n(\mathbb{P}_n)\Vert +
\Vert \hat{\alpha}^k_n(\mathbb{P}_n) -
\hat{\alpha}^k_n(\mathbb{P}_{n,\mathbf{Y}^{I}})\Vert +
\\&&\nonumber  \Vert
\hat{\alpha}^k_n(\mathbb{P}_{n,\mathbf{Y}^{I}}) -
\alpha^{k}(\mathbb{P}_{\mathbf{Y}^{I}})\Vert-\Vert \hat{\alpha}^k_n(\mathbb{P}_n)
- \hat{\alpha}^k_n(\mathbb{P}_{n,\mathbf{Y}^{I}})\Vert =
\\  &&\Vert
\alpha^{k}(\mathbb{P})-\hat{\alpha}^k_n(\mathbb{P}_n)\Vert+\Vert
\hat{\alpha}^k_n(\mathbb{P}_{n,\mathbf{Y}^{I}}) -
\alpha^{k}(\mathbb{P}_{\mathbf{Y}^{I}})\Vert \to 0 \ a.s.
\end{eqnarray*}
which entails (\ref{convh}).

\

Finally, (\ref{convh}) and the Proposition implies
\begin{eqnarray*}
arg \min_{I \in \mathcal{I}_d} \hat{h}^k_n(I) \rightarrow arg \min_{I \in \mathcal{I}_d} h^k(I) \text{ a.s. }
\end{eqnarray*}

which concludes the proof.

\end{proof}

\

\section*{Acknowledgements}

This work has been partially supported  by Grant  pict 2008-0921
from ANPCyT (Agencia Nacional de Promoci\'on Cient\'{i}fica y Tecnol\'ogica), Argentina. We are very grateful to Dr. Ricardo Fraiman for his invaluable assistance and Dra. Marcela Svarc for her helpful suggestions.
The authors would like to thank the referees for their constructive comments which improve significantly this work.

\

\end{document}